\subjclass[2010]{Primary 46H05; Secondary
46H35,
47L10.}
\keywords{$L^p$ operator algebra,  representation,  covariant representation, regular covariant representation, full crossed product, reduced crossed product, Powers group, $G$-invariant ideal, simple algebra,  trace
}
\theoremstyle{proclaim}
\newtheorem{theorem}{Theorem}[section]
\newtheorem{lemma}[theorem]{Lemma}
\newtheorem{proposition}[theorem]{Proposition}
\theoremstyle{definition}
\newtheorem{remark}[theorem]{Remark}
\newtheorem{definition}[theorem]{Definition}
\numberwithin{equation}{section}
\begin{document}

\title[Simple  reduced $L^p$ operator crossed products]{Simple reduced $L^p$ operator crossed products with unique trace}
\author[Shirin Hejazian,\; Sanaz Pooya ]{Shirin Hejazian,\; Sanaz Pooya}

\address{Shirin Hejazian, Department of Pure Mathematics,
Ferdowsi University of Mashhad,  Mashhad 91775, Iran;
}
\email{hejazian@um.ac.ir}
\address{Sanaz Pooya, Department of Pure Mathematics, Ferdowsi University of Mashhad,
Mashhad 91775, Iran}
\email{pooya.snz@gmail.com}

\begin{abstract}
Given $p \in (1, \infty)$, let $G$ be a countable Powers group, and let  $(G, A, \alpha)$ be
a separable nondegenerately representable isometric $G$-$L^p$ operator algebra. We show that if $A$ is $G$-simple and unital then the reduced $L^p$ operator crossed product of $A$ by $G$, $F^p_{\mathrm{r}}(G, A, \alpha)$, is simple. This generalizes  special cases of some results due to de la Harpe and Skanadalis in the $C^*$-algebra context. We will also show that the result is not true for $p=1$. Moreover, we prove that traces  on $F^p_{\mathrm{r}}(G, A, \alpha)$ are in  natural bijection  with $G$-invariant traces on $A$ via the standard conditional expectation. As a consequence, for a countable powers group $G$  the reduced $L^p$ operator group algebra, $F^{p}_{\mathrm{r}}(G)$,  is simple and has a unique normalized trace.
\end{abstract}

\maketitle
\section{INTRODUCTION}
For a discrete group $G$, its regular representation in $L^2(G)$ generates a $C^*$-algebra $C_{\mathrm{r}}^*(G)$ with a faithful trace. Such objects are interesting both in analytical and group theoretical contexts, a fact that became apparent from a result of Powers \cite{Po} which says that the reduced group $C^*$-algebra of a free group with two generators is simple and has a unique trace.

A group $G$ is called \emph{$C^*$-simple} if it is infinite and if its reduced group $C^*$-algebra has no nontrivial two-sided ideals.
Since the announcement of Powers' result in 1975, the class of
  $C^*$-simple groups and in general simple $C^*$-algebras has been considerably enlarged. For more recent examples see \cite{AM, BH, H1, dlhp, OO}. Indeed many authors applied his distinguished approach to some other groups which sometimes lead to defining new classes of $C^*$-simple groups.
One of those interesting classes is the class of \emph{Powers groups} defined in \cite{PLH}, see Definition \ref{pw} below.
These groups enjoy  both combinatorial and geometrical properties. As a first example one can think of nonabelian free groups. During recent years some authors by doing modifications in the definition of a Powers group have  introduced  new examples of $C^*$-simple groups and  study properties of the latter, cf. \cite{BN, B,  dlhp, Pr}.

  In \cite{HS}, de la Harpe and Skandalis  among other results proved  that the reduced $C^*$-crossed product, $C^*_{\mathrm{r}}(G, A,\alpha)$, by a Powers group $G$, on a unital $G$-simple $C^*$-algebra $A$, is simple and its traces are characterized in terms of traces on $A$.

Since the theory of crossed products has been developed, crossed products of other algebras than $C^*$-algebras and von Neumann algebras have received very little attention. But very recent efforts suggest that there is an interesting theory behind these. Indeed, in a new approach, Dirksen, de Jeu and Wortel in \cite{M1} defined crossed products of Banach algebras and Phillips in \cite{PhC} studied crossed products of a specific class of Banach algebras, the so-called $L^p$ operator algebras.
In fact,  Phillips along his way to compute the $K$-theory of an $L^p$ version of Cuntz algebras,  introduced crossed products of operator algebras on $\sigma$-finite $L^p$ spaces by isometric actions of locally compact groups, for $p \in [1, \infty)$.
In his very recent works on $L^p$ operator algebras, among many different results, he has  introduced some simple $L^p$ operator algebras.
The reader may refer to \cite{PhCr, PhC,  PhI}  for  details.

 Being interested in investigating simple $L^p$ operator algebras of crossed product type we are going to generalize  the main results of  \cite{HS} in the $L^p$ level for countable Powers groups.

 This paper is arranged as follows. Section 2 contains some preliminaries which are needed in the sequel. In Section 3, we state our main results.
 Here we should emphasize  that,  because of some technical requirements, in the definition of full and reduced $L^p$ operator crossed products \cite[Definition 3.3]{PhCr},  $G$ is assumed to be a second countable locally compact group. Hence in order to make our discrete groups fit in with this framework we need to consider countable Powers groups. In \cite{HS} the Powers groups are not assumed to be countable and there is no condition on the $C^*$-algebra other than it be unital. Here we will assume that the Powers group $G$ is countable and the unital $L^p$ operator algebra $A$ is  separable.  So the main results of this paper, in particular for $p=2$, generalize special cases of the main results in \cite{HS}.   We prove that the reduced $L^p$ operator crossed product, $F^p_{\mathrm{r}}(G, A, \alpha)$, is simple whenever  $p \in (1, \infty)$, $G$ is a countable Powers group,  $(G, A, \alpha)$ is a separable nondegenerately representable isometric $G$-$L^p$ operator algebra, and $A$ is  $G$-simple and unital. Furthermore, we show that  each trace on $F^p_{\mathrm{r}}(G, A, \alpha)$ is of the form $\sigma \circ E$, where $\sigma$  is a $G$-invariant trace on $A$ and $E$ is the standard conditional expectation from $F^p_{\mathrm{r}}(G, A, \alpha)$ to $A$, conversely,  $\sigma\circ E$ is always a trace on the reduced crossed product. As a consequence for $p\in(1, \infty)$, $F^{p}_{\mathrm r}(G)$, the $L^p$ operator group algebra of a countable Powers group $G$ is simple with a unique normalized trace. From this we can easily deduce  $L^p$ versions of some results by Powers \cite{Po} and Paschke and Salinas \cite{PS}.  It also turns out that for any countable discrete  group $G$ neither  $F^1_{\mathrm{r}}(G)$ nor $F^1(G) $ is simple, as one can see from \cite[Proposition 3.14]{PhCr}.

 Here in the lack of convenient geometric properties of Hilbert spaces, in particular that every closed subspace has  an orthogonal complement, we prove our key results by using the duality between $L^p$ and $L^q$ spaces when $p, q$ are conjugate exponents.

\section{PRELIMINARIES}

In this section we recall some basic definitions, examples and results mainly from \cite{PhCr}, in order to make this article self-contained.

Let $(X, \mathcal B, \mu)$ be a measure space. For  $p\in [1, \infty]$, we denote by $B(L^p(X, \mu))$ the Banach algebra of all bounded linear operators on $L^p(X, \mu)$.

 An \emph{$L^p$ operator algebra} is defined to be
 a Banach algebra $A$  which is isometrically
isomorphic to a norm closed subalgebra of $B(L^p(X, \mu))$ for some measure space $(X, \mathcal B, \mu)$ and  $p\in [1, \infty]$.
For  $p = 2$,  an $L^2$ operator algebra $A$ is isometrically
isomorphic to a norm closed (not necessarily self-adjoint) subalgebra of the
bounded operators on some Hilbert space.

Clearly, for $p \in [1,\infty]$ and  measure space $(X, \mathcal B, \mu)$, the algebra $B(L^p(X, \mu))$ is an $L^p$ operator algebra. Also if
$X$ is a locally compact Hausdorff space, then $C_0(X)$, with its supremum norm, is an $L^p$ operator algebra for all  $p \in [1,\infty]$, cf. \cite[Example 1.13]{PhCr}.
\begin{definition}\label{1}(\cite[Definition 1.17] {PhCr})
Let $p\in[1,\infty]$, and let $A$ be an $L^p$ operator algebra.\begin{enumerate}
\item [(i)] Let $(X, \mathcal B, \mu)$ be a measure space.  A \emph{representation} of $A$ (on $L^p(X, \mu)$) is a continuous homomorphism $\pi\colon A \to B(L^p(X, \mu))$. If $\| \pi(a)\|\leq \|a\|$ (resp.\ $\| \pi(a)\|=\|a\|$) for all $a\in A$, then $\pi$ is called \emph{contractive} (resp.\ \emph{isometric}).
\item [(ii)] Let $p \neq \infty$. A representation $\pi\colon A \to B(L^p(X, \mu))$ is called \textit{separable} if $L^p(X, \mu)$ is separable.
\item [(iii)] A representation $\pi$ is called \emph{$\sigma$-finite} if $\mu$ is $\sigma$-finite.
\item [(iv)] A representation $\pi$ is called \emph{nondegenerate} if
$$\pi(A)(L^p(X, \mu)) = \text{span} {(\{\pi(a)\xi \colon a\in A \,\,\text{and} \,\, \xi \in L^p(X, \mu)\})}$$
is dense in $L^p(X, \mu)$.
\item [(v)] We say that  $A$ is   \emph{separably} (\emph{nondegenerately})  \textit{representable} whenever
it has a separable (nondegenerate)  isometric representation, and \emph{nondegenerately $\sigma$-finitely representable} if it has a nondegenerate $\sigma$-finite isometric
representation.
\end{enumerate}
\end{definition}
By \cite[Remark 1.18]{PhCr}, if $A$ is separably (nondegenerately) representable, then it is $\sigma$-finitely (nondegenerately) representable and by \cite[Proposition 1.25]{PhCr} if $A$ is separable then it is separably representable.

Note that it is not required that   representations of a unital algebra to be unital, but nondegenerate representations of a unital algebra are necessarily unital.

\vspace{2mm}
Let $A$ be a Banach algebra and let $\text{Aut(A)}$ denote the group of all continuous automorphisms of $A$. Let $G$ be a topological group,
by an \emph{action} of $G$ on $A$ we mean a homomorphism $g \mapsto {{\alpha} _ g}$  from $G$ to $\text{Aut(A)}$ such that
for any $a\in  A$, the map $g \mapsto \alpha _g(a)$ from $G$ to $A$ is continuous.  An action $\alpha$ is called  \emph{isometric}  if each
${\alpha} _g$
is.
If a topological group $G$ acts on  an $L^p$ operator algebra $A$, then
the triple
$(G, A, {\alpha})$
is called a
$G$-$L^p$
 \emph{ operator algebra}, and it is an \emph{isometric}
  $G$-$L^p$
  operator algebra whenever
$\alpha$
is isometric.

A $G$-$L^p$
  operator algebra $(G, A, \alpha)$ is said to be separable If $A$ is separable and it is said to be   nondegenerately representable, $\sigma$-finitely representable, whenever $A$ has the corresponding property in the sense of Definition \ref{1}.

 As an example, let $p\in [1,\infty]$, let $X$ be a locally compact Hausdorff space, and let $G$ be a locally
compact group which acts continuously on X, that is the corresponding action  $(g, x)\mapsto g \cdot x $ is a continuous map from $G\times X$ to $X$. Then $C_0(X)$ is an $L^p$ operator algebra
and the action $\alpha$ of $G$ on $C_0(X)$ defined by
$\alpha_{g}(f)(x)=f(g^{-1}x)$ for $f\in C_0(X),\; g\in G$ and $x\in X$,
makes
 $(G,C_0(X),\alpha)$
an isometric $G$-$L^p$ operator algebra, see \cite[Example 2.4]{PhCr}.

\vspace{2mm}
Let $(G,A,\alpha )$ be a $G$-$L^p$ operator algebra. A two-sided ideal $I$ of $A$ satisfying $\alpha_g(I)\subset I$ for all $g\in G$, is called a $G$-\emph{invariant} ideal.
We say that $A$ is $G$-\emph{simple} if $\{0\}$ and $A$ are the only  $G$-invariant two-sided ideals. We note that according to some authors, a Banach algebra is said to be simple (resp.\ $G$-simple) if it has no nontrivial \emph{closed} two-sided ideals (resp.\ $G$-invariant  ideals). For unital Banach algebras these two notions coincide.
\begin{remark}

 Let $A$ be a Banach algebra, let $G$ be a locally compact group with left Haar measure $\nu$, and let
$\alpha\colon G \to {\text{\text{Aut(A)}}}$
 be an action of $G$ on $A$. Then
$C_c(G,A,\alpha )$,   the vector space of all compactly supported continuous functions
from $G$ to $A$ is an associative algebra over  $\mathbb{C}$, when it is equipped with the twisted convolution product defined by
\begin{equation*}(ab)(g) = \int_{G}{a(h)\, \alpha_{h}(b(h^{-1}g))\, d\nu(h)}\end{equation*}
for $a, b \in C_c(G,A,\alpha )$ and $g \in G$.

\end{remark}

Let $p\in [1, \infty]$. Let $G$ be a topological group, and let $(G, A, \alpha)$ be a $G$-$L^p$ operator algebra. Take a measure space $(X, \mathcal B, \mu)$. A \emph{covariant
representation} of $(G, A, \alpha)$ on $L^p(X, \mu)$ is a pair
 $(\upsilon, \pi)$ consisting of a representation
$ g\mapsto{\upsilon_g}$ from $G$ to the group of invertible operators on  $L^p(X, \mu)$ such that $g\mapsto \upsilon_g{\xi}$ is continuous
for all $\xi \in L^p(X, \mu)$, and a representation
 $\pi\colon A \to B(L^p(X, \mu))$ such that for all $ g \in G$ and $a \in A$, we have
\begin{equation*}\pi(\alpha_g(a)) = \upsilon_g\pi(a)\upsilon_g^{-1}.
\end{equation*}

A covariant representation $(\upsilon, \pi)$ of $(G, A, \alpha)$ is \emph{contractive} (resp.\ \emph{isometric}) if   $\|\upsilon_g\|\leq 1$ for all
$g \in G$ and $\pi$ is contractive (resp.\ \emph{isometric}). It is
\emph{separable}, \emph{$\sigma$-finite}, or \emph{nondegenerate} whenever $\pi$ has the corresponding property.
 Note that, if $(\upsilon, \pi)$ is contractive then for each $g\in G$,  $\upsilon_g$ is an isometric bijection.

Let $p\in [1, \infty]$ and $A$ an $L^p$ operator algebra. If $G$ is a locally compact group with a left Haar measure $\nu$ then any  covariant representation $(\upsilon, \pi)$  of $(G, A, \alpha)$ on some $L^p(X, \mu)$ leads to a representation $\upsilon \ltimes
 \pi$ of  $C_c(G, A, \alpha)$ on $L^p(X, \mu)$ defined by
\begin{equation}\label{cv1} (\upsilon\ltimes \pi)(a)\xi= \int_{G}(\pi(a(g))\;\upsilon_{g} \xi\ d\nu(g)
\end{equation}
for $a\in C_c(G, A, \alpha)$ and $\xi \in L^p(X, \mu)$. This integral is defined by duality, that is for every $\omega$ in the dual space $  L^p(X, \mu)^*$ of $  L^p(X, \mu)$ we should have
$$\omega\big((\upsilon\ltimes \pi)(a)\xi\big)=\int_{G}\omega\big((\pi(a(g))\;\upsilon_{g} \xi\big)\ d\nu(g)$$

 Here we bring some parts of Lemma 2.11 of \cite{PhCr}.

\begin{lemma}\label{2.11} Let $p\in [1, \infty)$. Let $G$ be a locally compact group with left Haar measure $\nu$, and let $(G, A, \alpha)$ be an isometric $G$-$L^p$ operator algebra. Take a measure space
 $(X, \mathcal B, \mu)$ and let $\pi_0: A \rightarrow B(L^p(X, \mu))$ be a contractive representation. Then the following   hold.
\begin{enumerate}
\item[(i)] There exists a unique  representation $\upsilon \colon G \to B(L^p(G\times X,\nu \times \mu))$ such  that for all $g, h \in G, \; x\in X$ and $\xi \in L^p(G\times X,\nu \times \mu),$
$$\upsilon_g(\xi)(h, x)= \xi(g^{-1}h, x),$$
and this $\upsilon$ is isometric.
\item[(ii)] There exists a unique  representation $\pi \colon A  \to L^p(G\times X,\nu \times \mu)$ such that
for $a\in A\,, h\in G$ and $\xi \in C_c(G, L^p(X, \mu))\subset L^p(G\times X,\nu \times \mu)$ we have
\begin{equation}\label{cv}(\pi(a)\xi)(h)=\pi_0(\alpha_h^{-1}(a))(\xi(h)),\end{equation}
and this $\pi$ is contractive.\\
If $G$ is countable and discrete and $\nu$ is the counting measure, then according to the  identification of $L^p(G\times X,\nu \times \mu)$ with $l^p(G, L^p(X, \mu)$ as in  \cite[Remark  2.10]{PhCr},  (\ref{cv}) holds for all $\xi \in L^p(G\times X,\nu \times \mu)$.
\item[(iii)] The pair $ (\upsilon, \pi)$ is covariant. Further, if $\pi_0$
 is nondegenerate then so is $\pi$.
\item[(iv)] If  $G$ is second countable and $\mu$ is $\sigma$-finite, then $\nu \times \mu$ is $\sigma$-finite.
\item[(v)] If $G$ is second countable and $L^p(X, \mu)$ is separable, then $L^p(G \times X, \nu \times \mu)$
is separable.\end{enumerate}\end{lemma}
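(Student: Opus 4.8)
The plan is to set everything up through the isometric identification $L^p(G\times X,\nu\times\mu)\cong L^p(G,L^p(X,\mu))$ (valid because $p<\infty$; cf.~\cite[Remark 2.10]{PhCr}) and through the density of $C_c(G,L^p(X,\mu))$ in this space. For part~(i) I would simply \emph{define} $\upsilon_g$ by the stated translation formula; left invariance of the Haar measure $\nu$, after the substitution $h\mapsto g^{-1}h$ in the integral defining $\|\upsilon_g\xi\|_p$, gives $\|\upsilon_g\xi\|_p=\|\xi\|_p$, so each $\upsilon_g$ is an isometry, it is bijective with inverse $\upsilon_{g^{-1}}$, and $\upsilon_g\upsilon_{g'}=\upsilon_{gg'}$ is immediate from the formula. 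Strong continuity $g\mapsto\upsilon_g\xi$ is checked first on elementary tensors $\eta\otimes f$ with $\eta\in C_c(G)$, using continuity of left translation in $L^p(G,\nu)$, and then extended to all $\xi$ by a $3\varepsilon$-argument since $\|\upsilon_g\|=1$ uniformly; uniqueness is clear because the formula is forced on the whole space.

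For part~(ii), the first point is that for $\xi\in C_c(G,L^p(X,\mu))$ the function $h\mapsto\pi_0(\alpha_h^{-1}(a))(\xi(h))$ again lies in $C_c(G,L^p(X,\mu))$: it is compactly supported because $\xi$ is, and continuous because $h\mapsto\alpha_h^{-1}(a)$ is norm continuous (the action and inversion are continuous), $a'\mapsto\pi_0(a')$ is norm continuous, and operator evaluation is jointly continuous on norm-bounded sets. The key estimate is
\begin{equation*}
\|\pi(a)\xi\|_p^p=\int_G\|\pi_0(\alpha_h^{-1}(a))(\xi(h))\|^p\,d\nu(h)\le\int_G\|\alpha_h^{-1}(a)\|^p\,\|\xi(h)\|^p\,d\nu(h)=\|a\|^p\,\|\xi\|_p^p,
\end{equation*}
using that $\pi_0$ is contractive and $\alpha$ is isometric; hence $\pi(a)$ extends uniquely to an operator of norm $\le\|a\|$ on $L^p(G\times X,\nu\times\mu)$. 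Linearity in $a$ is clear and multiplicativity $\pi(a)\pi(b)=\pi(ab)$ follows pointwise in $h$ from $\pi_0$ being a homomorphism and $\alpha_h^{-1}$ an automorphism. When $G$ is countable and discrete and $\nu$ is counting measure, every $\xi\in l^p(G,L^p(X,\mu))$ has well-defined coordinates $\xi(h)$, so (\ref{cv}) then holds verbatim for all $\xi$.

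For part~(iii), covariance is the pointwise identity, valid for $\xi\in C_c(G,L^p(X,\mu))$,
\begin{equation*}
(\upsilon_g\pi(a)\upsilon_g^{-1}\xi)(h)=(\pi(a)\upsilon_{g^{-1}}\xi)(g^{-1}h)=\pi_0(\alpha_{g^{-1}h}^{-1}(a))(\xi(h))=\pi_0(\alpha_h^{-1}(\alpha_g(a)))(\xi(h))=(\pi(\alpha_g(a))\xi)(h),
\end{equation*}
using $\upsilon_g^{-1}=\upsilon_{g^{-1}}$ and $\alpha_{g^{-1}h}^{-1}=\alpha_h^{-1}\alpha_g$, and it extends to all of $L^p$ by density. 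For the nondegeneracy assertion, assuming $\pi_0(A)L^p(X,\mu)$ is dense, I would approximate an arbitrary elementary tensor $\eta\otimes g_0$, with $\eta\in C_c(G)$ supported in a small open set $U$ around a point $h_0$ and $g_0\in L^p(X,\mu)$, by some $\pi(a)(\eta\otimes f)$: shrink $U$ so that $\|\alpha_h^{-1}(b)-\alpha_{h_0}^{-1}(b)\|<\varepsilon$ for $h\in U$, use nondegeneracy of $\pi_0$ to pick $b\in A$ and $f\in L^p(X,\mu)$ with $\pi_0(b)f\approx g_0$, and set $a=\alpha_{h_0}(b)$; then $(\pi(a)(\eta\otimes f))(h)=\eta(h)\,\pi_0(\alpha_h^{-1}(a))f\approx\eta(h)g_0$ on $U$ while both sides vanish off $U$. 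Since such tensors span a dense subspace of $L^p(G\times X,\nu\times\mu)$, $\pi$ is nondegenerate. Parts~(iv) and~(v) are soft: a second countable locally compact group is $\sigma$-compact, so $\nu$ is $\sigma$-finite and a product of $\sigma$-finite measures is $\sigma$-finite; and $L^p(G,\nu)$ is separable for $p<\infty$, so $L^p(G\times X,\nu\times\mu)\cong L^p(G,L^p(X,\mu))$ is separable, having a countable dense set of finite sums of elementary tensors $\eta\otimes f$ with $\eta,f$ drawn from countable dense subsets.

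The algebraic content---the homomorphism property of $\upsilon$, multiplicativity of $\pi$, covariance, and the isometry of $\upsilon$---is essentially formal once the formulas are written down. I expect the real work to be the analytic bookkeeping: verifying measurability so that $\pi(a)\xi$ is genuinely an element of $L^p(G\times X,\nu\times\mu)$, controlling the passage from the dense subspace $C_c(G,L^p(X,\mu))$ to the whole space (where one must know $p<\infty$), and the localization step in the nondegeneracy argument, where continuity of $\alpha$ is used to replace the $h$-dependent element $\alpha_h^{-1}(a)$ by a constant one on a small neighbourhood of each point.
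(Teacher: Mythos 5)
This lemma is not proved in the paper at all: it is imported verbatim as ``some parts of Lemma 2.11 of \cite{PhCr}'', so there is no in-paper argument to compare against. Your proposal reconstructs the standard construction of the regular covariant representation, which is exactly what Phillips does, and it is essentially correct: the isometry of $\upsilon_g$ from left invariance of $\nu$, the norm estimate $\|\pi(a)\xi\|_p\le\|a\|\,\|\xi\|_p$ from contractivity of $\pi_0$ plus isometry of $\alpha$, the pointwise covariance identity on $C_c(G,L^p(X,\mu))$ extended by density, and the soft $\sigma$-finiteness and separability arguments are all as they should be.

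Two small points to tighten. First, in the nondegeneracy step you ``pick $b\in A$ and $f\in L^p(X,\mu)$ with $\pi_0(b)f\approx g_0$''; nondegeneracy only gives a finite linear combination $\sum_i\pi_0(b_i)f_i\approx g_0$, so you should approximate $\eta\otimes g_0$ by $\sum_i\pi(\alpha_{h_0}(b_i))(\eta\otimes f_i)$ instead --- a cosmetic change, since the localization estimate $\|\alpha_h^{-1}\alpha_{h_0}(b_i)-b_i\|<\varepsilon$ for $h\in U$ works term by term. Second, the identification $L^p(G\times X,\nu\times\mu)\cong L^p(G,L^p(X,\mu))$ on which everything rests is not automatic for an arbitrary measure space $(X,\mathcal B,\mu)$; it is precisely the content of \cite[Remark 2.10]{PhCr} cited in the statement, so you should either invoke that remark explicitly (as the statement itself does) or restrict to the $\sigma$-finite case in which the paper actually uses the lemma. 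Neither point is a genuine gap in the argument's structure.
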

The covariant representation $(\upsilon, \pi)$ obtained as above
 is called the \emph{regular covariant representation of $(G,A, \alpha)$ associated
to $\pi_0$}. Any representation constructed in this way is called a \emph{regular contractive
covariant} representation. It is called \emph{separable}, \emph{$\sigma$-finite}, or \emph{nondegenerate} whenever the
 representation $\pi_0$ has the corresponding property.
 \vspace{2mm}

We now come to define  \textit{$L^p$ operator crossed products}. For  technical reasons as mentioned in \cite{PhCr}, $L^p$ operator crossed products are defined for second countable locally compact groups. To study the theory in a more general framework we refer to Section 3 of \cite{M1}.
\begin{definition}\label{crossed}(\cite[Definition 3.3]{PhCr})
Let $p\in[1,\infty)$, let $G$ be a second countable locally compact  group,
and let
$(G, A, \alpha)$
be an isometric
$G$-$L^p$
operator algebra which is nondegenerately $\sigma$-finitely representable. Following \cite[Definition 3.2]{M1} but with considering the family $\mathcal R$ to be as below, we define two corresponding crossed products.
\begin{enumerate}
\item [(i)] Let $\mathcal{R}^p$ be the family of all  covariant representations  coming from nondegenerate
$\sigma$-finite contractive representations of $A$. We define an algebra seminorm $\tau(\cdot)$ on $C_c(G, A, \alpha)$  by
$$ \tau(f) = \sup_{(\nu, \pi)\in {\mathcal{R}^p}} \| \nu \ltimes \pi (f)\| $$  for any $f\in C_{c}(G, A, \alpha)$. The \emph{ full $L^p$ operator crossed product}, $F^p(G, A, \alpha)$, is the completion of ${C_{c}(G, A, \alpha)}/ \ker(\tau)$ in the norm $\|\cdot\|$ induced by $\tau$.

\item [(ii)] Consider the family
$\mathcal{R}^p_{\mathrm{r}}$ of all regular covariant representations  coming from nondegenerate
$\sigma$-finite contractive representations and define an algebra seminorm $\tau_r(\cdot)$ on $C_c(G, A, \alpha)$ by
 $$ \tau_r (f) = \sup_{(\nu, \pi)\in \mathcal{R}^p_{\mathrm{r}}} \| \nu \ltimes \pi (f)\|, $$  for any $f\in C_{c}(G, A, \alpha)$. The completion of ${C_{c}(G, A, \alpha)}/ \ker(\tau_r)$ in the norm $\|\cdot\|_r$ induced by $\tau_r$ is called the \emph{ reduced $L^p$ operator crossed product} and is denoted by $F^p_{\mathrm{r}}(G, A, \alpha)$.
\end{enumerate} By \cite[Lemma 3.4]{PhCr},  $F^p(G, A, \alpha)$ and $F^p_{\mathrm{r}}(G, A, \alpha)$ exist as in \cite[Definition 3.2]{M1}.
 \end{definition}
 By assumption on  $(G, A, \alpha)$ in  the above definition,
 $A$ has a
 nondegenerate $\sigma$-finite isometric representation, see Definition \ref{1}(v).  Thus
    \cite[Proposition 3.11]{PhCr} implies that $\tau_r$ is actually a norm on ${C_{c}(G, A, \alpha)}$ and since $\tau_r \leq \tau$ the same is true for
   $\tau$. Moreover, \cite[Corollary 3.12]{PhCr} implies that ${C_{c}(G, A, \alpha)}$   can be considered as a subalgebra of both crossed products.

If we let $A=\mathbb C$ in Definition \ref{crossed}(i)-(ii), then the corresponding crossed products are called the $L^p$  operator group algebra and the reduced $L^p$ operator group algebra denoted by $F^p(G)$ and $F^{p}_r(G)$, respectively.
\begin{definition} \label{pw}(\cite[Definition, p.232]{PLH})
A  group $G$ is said to be a \emph{Powers group} if for any nonempty finite subset $F\subset G \setminus \{1\}$ and any integer $ k\geq 1$, there exist a disjoint partition $ G = C \amalg D$ and elements $ h_1, \ldots, h_k \in {G}$ such that
\begin{enumerate}
\item[(i)] $\,\, g C\cap C = \varnothing   \text{ for all}\,\,  g \in F,$
\item[(ii)] $ h_j D\cap h_l D = \varnothing \text{ for } j, l \in \{1, \ldots, k\}\, \text{with }\, j\neq l.$
\end{enumerate}
\end{definition}
\begin{remark}
The main idea of constructing Powers groups goes back to R. Powers and his proof of $C^*$-simplicity of nonabelian free groups \cite{Po}, for which he used combinatorial properties of free groups.
It is easily proved that free groups are Powers groups, see step two in the proof of \cite[Theorem 3]{H}.
\end{remark}
Some more examples of Powers groups are listed below.

\begin{enumerate}
\item[ (i)] Nonelementary torsion free  Gromov-hyperbolic groups; in particular, all nonabelian free groups \cite[Corollary 12]{H1}.
 \item[ (ii)] Certain amalgamated free products \cite[Proposition 10] {PLH}.
 \item[ (iii)] Nonsolvable subgroups of $PSL(2, \mathbb R)$ \cite[Proposition 5] {PLH}.
\item[ (iv)]   Any lattice $G$ in $ PSL(n, \mathbb C)$,  $n = 2, 3$ \cite[Proposition 13] {PLH}.
\end{enumerate}

Since 1985 when de la Harpe introduced Powers groups, many results have been obtained for these groups. Here we quote some of the more well-known ones.
 Powers groups are $C^*$-simple \cite[Proposition 3]{PLH},  they are also \emph{icc}  \cite[Proposition 1(a)]{PLH}. We recall that a  group $G$ is called an \emph{icc} group   if it is infinite  and if all its conjugacy classes distinct from $\{1\}$ are infinite. Furthermore, they are not amenable \cite [Proposition 1]{PLH}, and they do not even have  nontrivial amenable normal subgroup \cite[Proposition 1.6]{PS}.
For more details on the properties of Powers groups see \cite{PLH} and \cite{H1}.
\section{THE MAIN RESULTS}

In this section we present the main results regarding the simplicity and  characterizing  traces for reduced $L^p$ operator crossed products by countable Powers groups, for $p \in (1, \infty)$. We generalize the main results of de la Harpe and Skandalis in \cite{HS} in certain cases. To obtain these results we follow the main line of Powers \cite{Po}, but the proof of our key result, Theorem \ref{*}, is based on the duality between $L^p$ and $L^q$ spaces.
{\vspace {2mm}}

Throughout this section, we assume   that $A$ is a separable unital  $L^p$ operator algebra  on some  $\sigma$-finite measure space, that $p,q$ are conjugate exponents, and that $G$ is  a countable discrete group  with identity element $e$ and counting measure $\nu$. The unit element of $A$ is denoted by $1_A$ and the norm of $F^p_{\mathrm{r}} (G, A, \alpha)$  will be denoted by $\|\cdot\|_{\mathrm{r}}$.

{\vspace {2mm}}
  For $g\in G$,  let $u_g$ be the characteristic function of $\{g\}$ as a member of $C_c(g, A, \alpha)$. We may embed $G$  canonically into $C_c(G, A, \alpha)$ via the injective group homomorphism $g\mapsto u_g$ from $G$ to the group of invertible elements of $C_c(g, A, \alpha)$. It is easy to see that $u_e$ is the unit element of $C_c(g, A, \alpha)$ and ${u_{g}}^{-1}=u_{g^{-1}}$ for all $g\in G$.

Using \cite[Remark 4.6]{PhCr}, when it is necessary, we will identify $A$ as a subalgebra of $F^p_{\mathrm{r}} (G, A, \alpha)$ by considering the isometric homomorphism $a\mapsto au_e$. Note that $F^p_{\mathrm{r}} (G, A, \alpha)$ is a unital Banach algebra with unit element $1_A u_e$ which we will  identify it with $1_A$.
{\vspace {2mm}}

We begin with a lemma.
\begin{lemma}\label{11}
Let $p,\; q \in (1, \infty)$, let $k\in \mathbb{N}$ and let
$\lambda_1, \lambda_2, \ldots, \lambda _{k}, \gamma_{1}, \gamma_{2}, \ldots, \gamma_{k} \in \mathbb{R}$
be positive numbers such that  $\sum_{i=1}^{k}{\lambda_{i}^p}\leq  1$  and   $\sum_{i=1}^{k}{\gamma_{i}^q} \leq  1.$ Then
  $$\sum_{i=1}^{k}{\lambda_{i}}\leq {k^{\frac{1}{q}}}\hspace{2em} \text{and} \hspace{2em}  \sum_{i=1}^{k}{\lambda_{i} \gamma_{i}}\leq {1}. $$
\end{lemma}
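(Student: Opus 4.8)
The statement to prove is Lemma~\ref{11}, which is an elementary inequality. The plan is to derive both parts from H\"older's inequality for finite sums, exploiting that $p$ and $q$ are conjugate exponents, i.e. $\frac{1}{p} + \frac{1}{q} = 1$.

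First I would prove the second inequality $\sum_{i=1}^{k} \lambda_i \gamma_i \le 1$. This is immediate from H\"older: for nonnegative reals one has
\[
\sum_{i=1}^{k} \lambda_i \gamma_i \le \left(\sum_{i=1}^{k} \lambda_i^p\right)^{1/p} \left(\sum_{i=1}^{k} \gamma_i^q\right)^{1/q} \le 1^{1/p} \cdot 1^{1/q} = 1,
\]
using the two hypotheses $\sum \lambda_i^p \le 1$ and $\sum \gamma_i^q \le 1$. No subtlety here.

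For the first inequality $\sum_{i=1}^{k} \lambda_i \le k^{1/q}$, I would apply H\"older again but now pairing the vector $(\lambda_1,\dots,\lambda_k)$ against the constant vector $(1,1,\dots,1)$:
\[
\sum_{i=1}^{k} \lambda_i = \sum_{i=1}^{k} \lambda_i \cdot 1 \le \left(\sum_{i=1}^{k} \lambda_i^p\right)^{1/p} \left(\sum_{i=1}^{k} 1^q\right)^{1/q} \le 1 \cdot k^{1/q} = k^{1/q}.
\]
Alternatively, this first bound follows from the second one by choosing $\gamma_i = k^{-1/q}$ for all $i$, which satisfies $\sum \gamma_i^q = k \cdot k^{-1} = 1$, so that $\sum \lambda_i \gamma_i = k^{-1/q}\sum \lambda_i \le 1$, giving $\sum \lambda_i \le k^{1/q}$; I would probably present it this way to keep the argument self-contained and to emphasize that the first inequality is really a special case of the second. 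There is essentially no obstacle here; the only thing to be careful about is invoking H\"older in the correct direction and checking that the exponents used are genuinely conjugate, which holds by the standing assumption that $p,q \in (1,\infty)$ are conjugate.
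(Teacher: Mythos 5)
Your proposal is correct and is exactly what the paper means by its one-line proof ``It is immediate from H\"older's inequality'': the second bound is H\"older applied to $(\lambda_i)$ and $(\gamma_i)$, and the first is H\"older against the constant vector (equivalently, the special case $\gamma_i = k^{-1/q}$), using the standing assumption that $p$ and $q$ are conjugate. No further comment is needed.
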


\begin{proof}
It is immediate from H\"older's inequality.
\end{proof}

We need the following proposition in the proof of Theorem \ref{*}. Here for $a\in C_c(G, A, \alpha)$ and $g\in G$,  $a(g)$ will be denoted by $a_g$.
\begin{proposition} \label{CE}(\cite[Proposition 4.8, Proposition 4.9 (1)]{PhCr})
 Let $p\in [1,\infty)$, let $G$ be a countable discrete group, and let
$(G, A, \alpha)$ be a separable nondegenerately representable isometric $G$-$L^p$ operator
algebra. Then associated to each element $g\in G$, there is a
linear map $E_g \colon F^p_{\mathrm{r}} (G,A, \alpha) \to A $
with $\| E_g \| \leq 1$ such that if
$$a = \sum_{g\in G}{a_g u_g} \in C_c(G, A, \alpha)$$
then $E_g (a) = a_g$.
 Further, if $a \in F^p_{\mathrm{r}}(G, A, \alpha)$ with $E_g(a) = 0$  for each $g\in G$, then $a = 0$.
\end{proposition}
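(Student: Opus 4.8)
The plan is to build the conditional-expectation-type maps $E_g$ by exhibiting a single concrete regular covariant representation on which the coefficient functionals can be extracted by a bounded linear procedure, and then to transport this back to the abstract reduced crossed product using that $\tau_{\mathrm r}$ dominates the norm coming from this particular representation. First I would fix a nondegenerate $\sigma$-finite (indeed, by separability, separable) isometric representation $\pi_0\colon A\to B(L^p(X,\mu))$, which exists by hypothesis, and form the associated regular covariant representation $(\upsilon,\pi)$ on $\ell^p(G,L^p(X,\mu))\cong L^p(G\times X,\nu\times\mu)$ as in Lemma \ref{2.11}, with $\nu$ the counting measure. By \cite[Proposition 3.11]{PhCr} (cited after Definition \ref{crossed}) this regular representation is already isometric on $C_c(G,A,\alpha)$ in the $\|\cdot\|_{\mathrm r}$-norm, so $\upsilon\ltimes\pi$ extends to an isometric embedding of $F^p_{\mathrm r}(G,A,\alpha)$ into $B(\ell^p(G,L^p(X,\mu)))$; I will simply work inside this copy.

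Next, on this space I would write down the extraction formula. For $h\in G$ let $\delta_h\in\ell^p(G,L^p(X,\mu))$ denote the vector supported at $h$ with value $\xi\in L^p(X,\mu)$, and let $P_h$ be the norm-one idempotent onto the $h$-th coordinate. A direct computation with formula \eqref{cv1} and Lemma \ref{2.11}(i)--(ii) shows that for $a\in C_c(G,A,\alpha)$ and the generic vector $\delta_e\otimes\xi$ one gets $P_{g}\,(\upsilon\ltimes\pi)(a)\,(\delta_e\otimes\xi)=\delta_g\otimes\pi_0(a_g)\xi$ (up to the $\alpha_h$-twist bookkeeping in \eqref{cv}, which only relabels which $\alpha$ is applied). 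Composing the map $b\mapsto P_g\,b\,(\delta_e\otimes\,\cdot\,)$ with the coordinate identification and with $\pi_0^{-1}$ on its (closed, isometric) image yields a bounded linear map $E_g\colon F^p_{\mathrm r}(G,A,\alpha)\to A$ of norm at most $1$, since each $P_g$, each $\upsilon_g$, and $\pi_0$ are contractive/isometric, and it satisfies $E_g(a)=a_g$ on $C_c(G,A,\alpha)$ by the computation just made. Continuity and density then force this formula to characterize $E_g$ uniquely.

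For the last assertion, suppose $a\in F^p_{\mathrm r}(G,A,\alpha)$ has $E_g(a)=0$ for all $g$. Viewing $a$ as the operator $(\upsilon\ltimes\pi)(a)$, the previous paragraph says $P_g\,a\,(\delta_e\otimes\xi)=0$ for every $g\in G$ and every $\xi$, hence $a(\delta_e\otimes\xi)=0$ for all $\xi$; applying the same reasoning to the translated vectors $\delta_h\otimes\xi$ (using covariance $\upsilon_h^{-1}a\upsilon_h$ has the same coefficients shifted, together with $G$-invariance of the argument) gives $a\,\eta=0$ on a dense set of $\eta$, so $a=0$ by faithfulness of the regular representation.

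The step I expect to be the main obstacle is making the coordinate extraction formula fully rigorous for a general element $a\in F^p_{\mathrm r}(G,A,\alpha)$ rather than for $a\in C_c(G,A,\alpha)$: one must check that $P_g(\upsilon\ltimes\pi)(a)(\delta_e\otimes\xi)$ always lies in the closed subspace $\delta_g\otimes\pi_0(A)\xi$-closure so that applying $\pi_0^{-1}$ makes sense, and that the resulting element is independent of the choice of $\xi$ (cyclicity/nondegeneracy of $\pi_0$ is what is needed here). This is exactly where the nondegeneracy hypothesis and the identification of $L^p(G\times X,\nu\times\mu)$ with $\ell^p(G,L^p(X,\mu))$ from \cite[Remark 2.10]{PhCr} and Lemma \ref{2.11}(ii) do the real work; once that bookkeeping is settled, norm estimates and the vanishing statement are routine. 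Since this is quoted verbatim from \cite[Propositions 4.8, 4.9(1)]{PhCr}, I would in the write-up simply cite those results rather than reproduce the argument.
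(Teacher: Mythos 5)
The paper does not actually prove this proposition---it is imported verbatim from Phillips \cite[Propositions 4.8 and 4.9(1)]{PhCr}---and your sketch is a correct reconstruction of the argument used there: extract the $g$-th Fourier coefficient through a single isometric regular representation and extend by continuity. One simplification worth noting: the ``main obstacle'' you flag disappears if you define $E_g$ only on $C_c(G,A,\alpha)$ via the estimate $\|a_g\|=\|\pi_0(\alpha_g^{-1}(a_g))\xi\|/\|\xi\|\le\|(\upsilon\ltimes\pi)(a)\|=\|a\|_{\mathrm r}$ (using that $\pi_0$ and each $\alpha_g$ are isometric, and taking the supremum over $\xi$) and then extend by continuity, reserving the coordinate identity $P_g(\upsilon\ltimes\pi)(a)(\delta_e\otimes\xi)=\delta_g\otimes\pi_0(\alpha_g^{-1}(E_g(a)))\xi$---which holds for all $a\in F^p_{\mathrm r}(G,A,\alpha)$ by density and continuity of both sides---for the faithfulness claim, so that $\pi_0^{-1}$ never needs to be applied to an element not already known to lie in $\pi_0(A)$.
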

 Under the same assumptions as in Proposition \ref{CE},
the bounded linear map $E \colon F^p_{\mathrm{r}} (G, A, \alpha) \to A$
 defined by
$$E ( \sum_{g\in G}{a_g u_g})= a_e$$
for
$\sum_{g\in G}{a_g u_g} \in  C_c (G, A, \alpha)$, is called the \emph{standard conditional expectation} from
$F^p_{\mathrm{r}} (G, A, \alpha)$ to  $A$. See Section 4 of \cite{PhCr} for more on this concept.
\vspace{0.2mm}

 The next theorem  has a key role in the proof of the main results.

\begin{theorem} \label{*}
Let $p \in (1, \infty)$,
 let $G$ be a Powers group, and let $(G, A, \alpha)$ be a separable nondegenerately representable isometric $G$-$L^p$ operator algebra. Let $ a \in F_{\mathrm{r}}^p (G, A, \alpha)$, and let $\epsilon > 0 $. Then there exist $ k \in \mathbb{N}$  and $ h_1, h_2, \ldots, h_k \in G$ such that the
  averaging operator $ T \colon F^p_{\mathrm{r}} (G, A, \alpha) \to F^p_{\mathrm{r}} (G, A, \alpha),$
  defined by

 $$ T(b) = \frac{1}{k} \sum_{j=1}^{k}{u_{h_j} b u_{{h_j}}^{-1}},$$
 satisfies $\| T(a - E (a)) \|_{\mathrm{r}} < \epsilon.$
\end{theorem}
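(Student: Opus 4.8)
The plan is to reduce to the case $a \in C_c(G,A,\alpha)$ by a density and perturbation argument, and then exploit the Powers group property directly on the finitely many group elements appearing in the support of $a-E(a)$.

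First I would use Proposition \ref{CE} (boundedness of $E$, so $\|a-E(a)\|_{\mathrm{r}} \leq 2\|a\|_{\mathrm{r}}$) together with the fact that $C_c(G,A,\alpha)$ is dense in $F^p_{\mathrm{r}}(G,A,\alpha)$ to pick $a' \in C_c(G,A,\alpha)$ with $\|a-a'\|_{\mathrm{r}}$ as small as we like; since $E$ restricted to $C_c$ just reads off the coefficient $a'_e$, we have $a' - E(a') \in C_c(G,A,\alpha)$ and its support is a finite set $F \subseteq G\setminus\{e\}$. Any averaging operator $T$ as in the statement is a convex combination of the maps $b \mapsto u_{h_j} b u_{h_j}^{-1}$, each of which is an isometry of $F^p_{\mathrm{r}}(G,A,\alpha)$ (because $u_{h_j}$ is implemented by an isometric invertible operator in every regular covariant representation, conjugation by it being the action $\alpha_{h_j}$ on coefficients and a shift on $G$), hence $\|T\| \leq 1$ for any choice of $h_1,\dots,h_k$. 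Therefore $\|T(a-E(a))\|_{\mathrm{r}} \leq \|T(a'-E(a'))\|_{\mathrm{r}} + 2\|a-a'\|_{\mathrm{r}}$, and it suffices to make the first term small for an appropriate choice of the $h_j$.

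Next I would apply the Powers property to the finite set $F$ (and a parameter $k$ to be chosen): write $G = C \amalg D$ with $gC \cap C = \varnothing$ for all $g \in F$, and $h_1,\dots,h_k \in G$ with $h_j D \cap h_l D = \varnothing$ for $j \neq l$. For $b = a'-E(a') = \sum_{g\in F} b_g u_g$, a direct computation shows $u_{h_j} b u_{h_j}^{-1} = \sum_{g\in F} \alpha_{h_j}(b_g)\, u_{h_j g h_j^{-1}}$, so $T(b) = \frac{1}{k}\sum_{j=1}^{k}\sum_{g\in F}\alpha_{h_j}(b_g)\,u_{h_j g h_j^{-1}}$. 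The key point is that the conjugates $h_j g h_j^{-1}$ are all different from $e$, and for a fixed $g$ the elements $h_j g h_j^{-1}$ tend to be spread out. The estimate of $\|T(b)\|_{\mathrm{r}}$ will be carried out through a regular covariant representation on $\ell^p(G,L^p(X,\mu))$: one computes the operator norm of $(\upsilon\ltimes\pi)(T(b))$ by splitting the domain $\ell^p$-space according to the partition $G = C \amalg D$ of the $G$-coordinate, using that on the $C$-part the supports $h_j g h_j^{-1} C$ are disjoint from $C$ (so these contributions are "off-diagonal" and handled by a Hölder-type estimate with the factor $k^{1/q}/k = k^{-1/p}$), and that on the $D$-part the translates $h_j D$ are mutually disjoint (so the $k$ summands act on disjoint blocks, again giving a $k^{-1/p}$ gain after Lemma \ref{11}). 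Letting $k \to \infty$ drives $\|T(b)\|_{\mathrm{r}}$ below any prescribed bound.

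The main obstacle is this last norm estimate: in the $C^*$/Hilbert-space setting of \cite{HS} one uses orthogonality of subspaces and the $C^*$-identity, neither of which is available here. This is precisely where the duality between $L^p$ and $L^q$ (and Lemma \ref{11}) must substitute for Hilbert-space geometry: I would estimate $\|(\upsilon\ltimes\pi)(T(b))\xi\|_p$ by pairing against an arbitrary $\omega \in L^q$, decomposing both $\xi$ and $\omega$ along the partition-induced block decomposition of $\ell^p(G,L^p)$ and $\ell^q(G,L^q)$, bounding each cross term using that the relevant group translates move $C$ off itself or keep the $h_j D$ disjoint, and then applying Lemma \ref{11} to the resulting sums of $p$-th and $q$-th powers of block norms to collect a factor of order $k^{1/q}$ which, divided by $k$, yields the decay. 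Getting the bookkeeping of these blocks and the Hölder exponents exactly right — so that the bound is genuinely $O(k^{-1/p})\|b\|_{\mathrm{r}}$ uniformly over the representation — is the technical heart of the argument.
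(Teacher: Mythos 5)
Your proposal is correct and follows essentially the same route as the paper: reduce to $C_c(G,A,\alpha)$ using $\|T\|\leq 1$ and $\|E\|\leq 1$, apply the Powers property to the finite support of $a'-E(a')$, and estimate $(\upsilon\ltimes\pi)(T(b))$ by $L^p$--$L^q$ duality with a block decomposition along the disjoint translates $h_jD$. The ``technical heart'' you defer is carried out in the paper exactly as you sketch it: one inserts the idempotents $e_j$ (multiplication by $\chi_{(h_jD)\times X}$) and their adjoints on both sides of the pairing, the $(1-e_j)$-against-$(1-e_j^*)$ term vanishes because $g_iC\cap C=\varnothing$ forces $\upsilon_{h_jg_ih_j^{-1}}$ to carry functions supported off $h_jD$ into $h_jD$, and Lemma \ref{11} bounds the three surviving cross terms by $\|a\|_1\bigl(k^{-1}+k^{-1/p}+k^{-1/q}\bigr)$.
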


\begin{proof}
First we take $a \in C_c (G, A, \alpha)$ with $E (a) =0.$ That is,  there exist
 $n \in\mathbb{N} $, $g_1,\, g_2, \ldots,\, g_n \in G \setminus \{1\}$\,
   and nonzero elements $a_{g_1}, a_{g_2}, \ldots, a_{g_n} \in {A}$  such that $a = \sum_{i = 1}^{n}{a_{g_i} u_{g_i}}$.  Let $\|\cdot\|_1$ be the restriction of the $L^1$ norm to $C_c (G, A, \alpha)$. We may assume that $a\neq 0$. Choose $ k\in \mathbb{N}$ such that
$$ k^{-1}\;+\;k^{-\frac{1}{p}}\;+\; k^{-\frac{1}{q}} < \frac{\epsilon }{2 \|a\|_{1}}\;.$$
Put $F = \{ g_1, \ldots, g_n\}$. Since $G$ is a Powers group, for this $F$ and $k$ there exists a partition $\{C, D\}$ of $G$ and $h_1, \ldots h_k$ in $G$ which satisfy Definition  \ref{pw}.

Let $\pi_0$ be an arbitrary nondegenerate $\sigma$-finite contractive  representation of $A$ on $L^p(X, \mu)$ for some measure space $(X, \mathcal B, \mu)$ and let $(\upsilon, \pi)$ be the regular covariant representation associated  to $\pi_0$.

Let   $\chi_{_S}$ denote the characteristic function of
 $S\subset G \times X$. For each $j\in \{1 \ldots k\}$, define the idempotent operator
\begin{align*}
e_j&\colon L^p (G \times X, \nu \times \mu ) \,\,\to \,\,\, L^p (G \times X, \nu \times \mu)\\
 &\hspace*{2.83 cm}\xi \hspace*{.008cm} \mapsto \hspace*{.3cm} \chi_{_{(h_j D) \times X}}\cdot\,\xi
\end{align*}
then
\begin{align*}
e_j^* &\colon L^q (G \times X, \nu \times \mu) \,\, \to \,\,\, L^q (G \times X, \nu \times \mu )\\
 &\hspace*{2.78cm}\eta \hspace*{0.08cm} \mapsto \hspace*{.3cm} \chi_{_{(h_j D) \times X}}\cdot\,\eta
\end{align*}
is the adjoint operator of $e_j$.

Let $ \xi \in {L^p (G \times X, \nu \times \mu )}$ and $\eta \in {L^q (G \times X, \nu \times \mu)} $
satisfy  $\|\xi\|_p = \|\eta\|_q=1,$
 so \begin{align*}\sum_{j = 1}^{k}{\|e_{{j}} \xi \|_p^p \leq \| \xi\|_p^p = 1} \,\,\,\, \text{and} \,\,\,\, \sum_{j = 1}^{k}{\|e_{{j}}^* \eta \|_q^q \leq \| \eta\|_q^q =1}.\end{align*}
Define the averaging operator $T \colon F^p_{\mathrm{r}} (G, A, \alpha) \to F^p_{\mathrm{r}} (G, A, \alpha)$ by
 \begin{eqnarray*} T(b) = \frac{1}{k} \sum_{j=1}^{k}{u_{h_j} b u_{{h_j}}^{-1}}\;.\end{eqnarray*}
Since by \cite[Lemma 4.13]{PhCr} for each $j=1, \ldots k$, left and right multiplication operators by $u_{h_j}$ are isometric maps  on $F^p_{\mathrm{r}} (G, A, \alpha)$, we have $\|T\| \leq 1$.
    Consider the representation $\upsilon \ltimes \pi$ of $C_c (G, A, \alpha)$ as given in Equation (\ref{cv1}). Note that for each $g\in G$, $(T(a))(g)= \frac{1}{k} \sum_{j = 1}^{k} \alpha_{h_j}(a_{h_{j}^{-1} g h_{j}})$. Therefore for the given $\xi \in L^p(G \times X, \nu\times \mu)$ we have
\begin{align*}
((\upsilon \ltimes \pi) T(a))\xi &= \sum_{g\in G}\pi \left(T(a)(g)\right)\upsilon_{g}\xi
\\
&=\sum_{g\in G}\pi\left(\frac{1}{k} \sum_{j = 1}^{k} \alpha_{h_j}(a_{h_{j}^{-1} g h_{j}})\right)\upsilon_g \xi\;.
\end{align*}
Since the support of $a$ is the set $\{g_1, \ldots, g_n\}$ it follows that
\begin{equation*}((\upsilon \ltimes \pi) T(a))\xi= \frac{1}{k}\sum_{j = 1}^{k} \sum_{i = 1}^{n}\pi \left(\alpha_{h_j}(a_{g_i})\right)\upsilon_{h_j g_i h_{j}^{-1}} \xi \;.\end{equation*}
 Using H\"older's inequality we then have
\begin{align*}
&\left|\left\langle{(\upsilon \ltimes \pi) T(a) \xi, \eta}\right\rangle \right|
\\
&= \left|\left\langle{ \frac{1}{k}\sum_{j = 1}^{k} \sum_{i = 1}^{n}(\pi (\alpha_{h_j}(a_{g_i}))\,\upsilon_{h_j g_i h_{j}^{-1}})\, \xi,\, \eta} \right\rangle\right|
\\
&= \left|\left\langle{ \frac{1}{k}\sum_{j = 1}^{k} \sum_{i = 1}^{n}(\pi (\alpha_{h_j}(a_{g_i}))\upsilon_{h_j g_i h_{j}^{-1}} (e_j + (1- e_j)) \xi , (e_j^*+(1-e_j^*))\eta} \right\rangle\right|
\\
&\leq \frac{1}{k}{\sum_{j = 1}^{k}{\sum_{i=1}^{n}{\left|{\left\langle{\pi(\alpha_{h_j} (a_{g_i})) \upsilon_{h_j g_i {h_j}^{-1}} }{(e_j + (1- e_j))\xi, (e_j^*+(1-e_j^*))\eta}\right\rangle}\right|}}}
\\
&\leq \frac{1}{k}\sum_{j=1}^{k}\sum_{i=1}^{n} \big(\left\|\pi(\alpha_{h_j} (a_{g_i}))\right\|\, \big(\| e_{j} \xi\|_p\cdot \| e_{j}^* \eta\|_q + \|(1-e_j)\xi\|_p \cdot \|e_j^* \eta\|_q
\\
& \hspace{.9em}{\mbox{}} + \|e_{j} \xi \|_p\cdot  \| (1-e_{j}^{*})\eta\|_q \big)
+ \left| \left\langle \pi (\alpha_{h_j}(a_{g_i}))\upsilon_{h_jg_ih_j^{-1}}(1-e_j)\xi,(1-e_j^*)\eta \right\rangle \right|\big).
\end{align*}
Now  applying Lemma \ref{2.11}(i)-(ii), for each $h\in G$ and $x \in X$ we have
 \begin{align*}
 &\left(\pi (\alpha_{h_j}(a_{g_i}))\upsilon_{h_jg_ih_j^{-1}}(1-e_j)\xi\right)(h, x)
 \\
 &\hspace{6em}=\left(\big(\pi (\alpha_{h_j}(a_{g_i}))\upsilon_{h_jg_ih_j^{-1}}(1-e_j)\xi\big)(h)\right)(x)
 \\
 &\hspace{6em}=\left(\pi_0(\alpha_{h^{-1}h_j}(a_{g_i}))(\upsilon_{h_jg_ih_j^{-1}}(1-e_j)\xi(h))\right)(x)
 \\
 &\hspace{6em}=\left(\pi_0(\alpha_{h^{-1}h_j}(a_{g_i}))(\chi_{_{(h_jD)^{c}\times X}} \cdot \xi)(h_jg_i^{-1}h_j^{-1}h)\right)(x).
 \end{align*}
 We can now exploit the properties of  the Powers group $G$, as follows. If $h \notin h_j D$ then $h_j^{-1}h\in C$. Therefore $g_i^{-1}h_j^{-1}h \notin C$ and hence this element belongs to $D$.  It follows that $h_jg_i^{-1}h_j^{-1}h\in h_jD$, so
 for all $h\notin h_jD$
 $$ \big(\pi (\alpha_{h_j}(a_{g_i}))(\upsilon_{h_jg_ih_j^{-1}}(1-e_j)\xi)\big)(h, x)=0,$$
and we arrive at
$$\left\langle \big(\pi (\alpha_{h_j}(a_{g_i}))\upsilon_{h_jg_ih_j^{-1}}(1-e_j)\big)\xi, (1-e_j^*)\eta \right\rangle=0.$$
  Therefore by Lemma \ref{11}
\begin{align*}
&\left|\left\langle{(\upsilon \ltimes \pi) T(a) \xi, \eta}\right\rangle \right|
\\
&\hspace{6em}\leq \frac{1}{k}\|a\|_{1}\sum_{j=1}^{k}  \left(\| e_{j} \xi\|_p \cdot \| e_{j}^* \eta\|_q + \|e_j^* \eta\|_q
+ \|e_{j} \xi \|_p\right)
\\
&\hspace{6em}\leq \frac{1}{k}\|a\|_{1}  \left(1 + k^{\frac{1}{p}}\;+ \; k^{\frac{1}{q}}\right)
\\
&\hspace{6em} = \|a\|_{1}( k^{-1} + k^{-\frac{1}{q}} + k^{-\frac{1}{p}}) < {\frac{\epsilon}{2}}\; .
\end{align*}
Since $ \xi \in L^p (G\times X, \nu \times \mu) \,\, \mbox{and} \,\, \eta \in L^q (G \times X, \nu \times \mu)$ are arbitrary elements of norm 1,  it follows that the norm of  $(\upsilon \ltimes \pi)T(a)$, as an element of $B(L^p(X \times G, \mu \times \nu))$, does not exceed $\epsilon/2$. Since $(\upsilon, \pi)$ varies arbitrarily among all regular covariant representations  coming from nondegenerate $\sigma$-finite   contractive representations of $A$, it follows from the definition of the reduced crossed product norm that
$$\| T(a)\|_{\mathrm{r}} \leq \epsilon/2  < \epsilon\;.$$

Next, suppose that  $a\in C_c (G, A, \alpha)$ is arbitrary. Applying the previous step to the element $a - E (a)$, we may find an averaging operator $T$ such that
$$ \| T(a - E (a)) \|_{\mathrm{r}} < \epsilon \;.$$

 Finally, let $a \in F_{\mathrm{r}}^p (G, A, \alpha)$. By density of $C_c (G, A, \alpha)$ in $F_{\mathrm{r}}^p (G, A, \alpha)$,
  there exists $b \in C_c (G, A, \alpha) $ such that $\| a - b \|_{\mathrm{r}} < \frac{\epsilon}{3}$. Now by applying the first step, we may find an averaging operator $T$ such that
  $$ \| T(b - E(b)) \|_{\mathrm{r}} < \frac{\epsilon}{3}\;.$$
 Since $\| T \| \leq 1$ and $\| E \| \leq 1$, we then have
  $$\| T(a - E (a)) \|_{\mathrm{r}} \leq \| T(a) - T(b)\|_{\mathrm{r}} + \| T(b - E(b))\|_{\mathrm{r}} + \|T(E(b)) - T(E(a)) \|_{\mathrm{r}} <\epsilon\;.$$
 This completes the proof.
\end{proof}
We recall that a \emph{(normalized) trace} on a unital Banach algebra $A$ is a  bounded linear functional $\tau$ on $A$ (of norm 1  satisfying $\tau(1) = 1$) such that $\tau(ab) = \tau
(ba)$ for all $a, b \in A$.
Normalized traces on a unital  $C^*$-algebra  are exactly the tracial
states.
\begin{definition}Let $p \in [1, \infty]$, and  let $(G, A, \alpha)$ be a $G$-$L^p$ operator algebra. A  (normalized) trace on $A$ is said to be \textit{$G$-invariant} if it satisfies
$\tau(\alpha_g(a)) = \tau(a)$ for all $a \in A$.
\end{definition}
In the next step we are going to characterize  traces on $F^p_{\mathrm r}(G, A, \alpha)$. Note that  $\sigma \circ E$ is always a trace on  the reduced crossed product whenever $\sigma$ is a $G$-invariant trace on $A$. We will show that this is the only possible form for a trace on $F^p_{\mathrm r}(G, A, \alpha)$.
\begin{theorem}\label{trace}
Let $p \in (1, \infty)$, let $G$ be a countable Powers group, and let $(G, A, \alpha)$ be a separable nondegenerately representable isometric $G$-$L^p$ operator algebra. Then  each (normalized) trace  of $F^p_{\mathrm{r}} (G, A, \alpha)$ is of the form $\sigma\circ E$, where $\sigma$ is a $G$-invariant (normalized) trace on $A$. In particular, if $A$ has a unique normalized trace then so does $F^p_{\mathrm{r}} (G, A, \alpha)$.

\end{theorem}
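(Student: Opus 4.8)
The plan is to show that any normalized trace $\tau$ on $F^p_{\mathrm{r}}(G,A,\alpha)$ factors through the standard conditional expectation $E$, and then to identify the resulting functional on $A$ as a $G$-invariant trace. The crucial input is Theorem \ref{*}: for each $a\in F^p_{\mathrm{r}}(G,A,\alpha)$ and each $\epsilon>0$ there is an averaging operator $T(b)=\frac1k\sum_{j=1}^k u_{h_j}b\,u_{h_j}^{-1}$ with $\|T(a-E(a))\|_{\mathrm{r}}<\epsilon$. Since $\tau$ is a trace, $\tau(u_{h_j}b\,u_{h_j}^{-1})=\tau(b)$ for every $j$ (the $u_{h_j}$ are invertible in the unital algebra $F^p_{\mathrm{r}}(G,A,\alpha)$), and hence $\tau(T(b))=\tau(b)$ for every $b$. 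Applying this with $b=a-E(a)$ and using $\|\tau\|=1$ gives $|\tau(a-E(a))|=|\tau(T(a-E(a)))|\le \|T(a-E(a))\|_{\mathrm{r}}<\epsilon$; letting $\epsilon\to0$ yields $\tau(a)=\tau(E(a))$ for all $a$. Thus $\tau=\sigma\circ E$ where $\sigma:=\tau|_A$ is the restriction of $\tau$ to the (isometrically embedded, unital) subalgebra $A$.

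Next I would verify that $\sigma$ has the required properties. It is linear and bounded with $\|\sigma\|\le\|\tau\|\le 1$; since $E$ is a contraction with $E(1_A)=1_A$ and $1_A$ is the unit of $F^p_{\mathrm{r}}(G,A,\alpha)$, we get $\sigma(1_A)=\tau(1_A)=1$, so in fact $\|\sigma\|=1$ and $\sigma$ is a normalized trace on $A$ once the trace identity is checked. The trace identity $\sigma(ab)=\sigma(ba)$ for $a,b\in A$ is immediate because $\tau$ is a trace and $ab,ba$ lie in $A\subset F^p_{\mathrm{r}}(G,A,\alpha)$ with the product computed inside that algebra agreeing with the product in $A$. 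For $G$-invariance, recall from the covariance relation that inside $F^p_{\mathrm{r}}(G,A,\alpha)$ one has $u_g\,a\,u_g^{-1}=\alpha_g(a)$ for $a\in A$, $g\in G$ (this is part of the structure recorded in \cite[Remark 4.6, Lemma 4.13]{PhCr}); hence
$$
\sigma(\alpha_g(a))=\tau(u_g\,a\,u_g^{-1})=\tau(a)=\sigma(a),
$$
using the trace property once more. This shows $\sigma$ is a $G$-invariant normalized trace on $A$.

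Finally, the converse direction — that $\sigma\circ E$ is a trace on $F^p_{\mathrm{r}}(G,A,\alpha)$ whenever $\sigma$ is a $G$-invariant trace on $A$ — is noted in the paragraph preceding the theorem, so the bijection is established, and the last sentence follows: if $A$ has a unique normalized trace $\sigma_0$, then every normalized trace on $F^p_{\mathrm{r}}(G,A,\alpha)$ equals $\sigma_0\circ E$, so it is unique. I expect the only genuinely delicate point to be the passage $\tau=\tau\circ E$, which rests entirely on Theorem \ref{*} together with the elementary observation that a trace is constant on averages of conjugates by invertible elements; once that is in hand, everything else is a routine check of linearity, the trace identity, normalization, and $G$-invariance using the covariance relation in the crossed product.
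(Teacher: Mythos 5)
Your proposal is correct and follows essentially the same route as the paper: apply Theorem \ref{*} to $a-E(a)$, use the tracial identity to see that $\tau$ is unchanged by the averaging operator, conclude $\tau=\tau\circ E$, and set $\sigma=\tau|_A$. Your additional verifications (normalization, the trace identity on $A$, and $G$-invariance via $u_g a u_g^{-1}=\alpha_g(a)$) are details the paper leaves implicit, and they check out.
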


\begin{proof}
Let $\tau$ be a trace on $F^p_{\mathrm{r}}(G, A, \alpha)$, let $a \in F^p_{\mathrm{r}}(G, A, \alpha)$, and let $\epsilon > 0$  be given. By Theorem \ref{*} there exist $k \in \mathbb{N}$ and $h_1, h_2, \ldots, h_k\in G$ such that
$$\left\| \frac{1}{k} \sum_{j = 1}^{k}{u_{h_j} (a -  E(a))u_{h_j}^{-1}}\right\|_{\mathrm{r}}< \epsilon.$$
By the tracial property of $\tau$, we then  have
$$ \left|\tau(a) - \tau(E(a)) \right|= |\tau(a- E(a))|=|\tau(\frac{1}{k} \sum_{j = 1}^{k}{u_{h_j} (a -  E(a))u_{h_j}^{-1}})| \leq \|\tau\|\epsilon.$$
Hence $\tau (a - E(a)) = 0$. Put $\sigma = \tau |_{{A}}$\;, then
\begin{eqnarray*}\tau(a) = \tau (E(a)) = \tau|_{{A}} (E(a)) = \sigma \circ E(a),\end{eqnarray*}
and $\sigma$ is a $G$-invariant  trace on $A$.
\end{proof}

The next lemma inspired by \cite[Lemma 9]{HS} will help us to obtain a generalization of Powers' idea as a main result of this article.
\begin{lemma} \label{S}
Let $G$ be a countable discrete group, let $(G, A, \alpha)$ is a separable nondegenerately representable isometric $G$-$L^p$ operator algebra and let $A$ be unital and $G$-simple. If $I$ is a nonzero two-sided ideal of $F^p_{\mathrm{r}} (G, A, \alpha)$, then there exists a nonzero element $a \in I$ such that $E(a) = 1_A$.
\end{lemma}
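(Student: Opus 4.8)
The plan is to derive the lemma from the $G$-simplicity of $A$ by a purely algebraic ideal argument; Theorem \ref{*} is not needed, and the only ingredients are Proposition \ref{CE}, the $A$-bimodule property of the standard conditional expectation, and the covariance relation inside $F^p_{\mathrm{r}}(G,A,\alpha)$. First I would normalize the problem: pick any nonzero $b' \in I$; by Proposition \ref{CE} there is $g \in G$ with $E_g(b') \neq 0$. Since $I$ is a two-sided ideal and $u_{g^{-1}} \in C_c(G,A,\alpha) \subseteq F^p_{\mathrm{r}}(G,A,\alpha)$, the element $b := b' u_{g^{-1}}$ again lies in $I$, and a direct computation on $C_c(G,A,\alpha)$ extended by continuity (using $\|E\|\le 1$ and that right multiplication by $u_{g^{-1}}$ is isometric, \cite[Lemma 4.13]{PhCr}) shows $E(b) = E_g(b') =: c$, a nonzero element of $A$. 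Note that $I$ need not meet $C_c(G,A,\alpha)$, so $b'$, and hence $b$, must be treated as a general element of $F^p_{\mathrm{r}}(G,A,\alpha)$.

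Next I would invoke $G$-simplicity. Let $J$ be the two-sided ideal of $A$ generated by the orbit $\{\alpha_h(c) : h \in G\}$; since $A$ is unital, $J$ consists of all finite sums $\sum_i x_i\,\alpha_{g_i}(c)\,y_i$ with $x_i, y_i \in A$ and $g_i \in G$. Applying any $\alpha_h$ to such a sum yields an element of the same shape, because $\alpha_h(\alpha_{g_i}(c)) = \alpha_{hg_i}(c)$ again lies in the orbit; hence $J$ is $G$-invariant, and it is nonzero since $c = \alpha_e(c) \in J$. By $G$-simplicity $J = A$, so in particular there are $m \in \mathbb{N}$, elements $x_1,\ldots,x_m, y_1,\ldots,y_m \in A$, and $g_1,\ldots,g_m \in G$ with $1_A = \sum_{i=1}^{m} x_i\,\alpha_{g_i}(c)\,y_i$.

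Finally I would lift this identity into the crossed product. Set $a := \sum_{i=1}^{m} (x_i u_{g_i})\, b\, (u_{g_i}^{-1} y_i)$, which lies in $I$ because $I$ is two-sided and each $x_i u_{g_i}$ and $u_{g_i}^{-1} y_i$ belongs to $F^p_{\mathrm{r}}(G,A,\alpha)$. I would then use two identities: $E(xzy) = x\,E(z)\,y$ for $x,y \in A$ and $z \in F^p_{\mathrm{r}}(G,A,\alpha)$, and $E(u_h z u_h^{-1}) = \alpha_h(E(z))$ for $h \in G$ and $z \in F^p_{\mathrm{r}}(G,A,\alpha)$. Both are verified first on the dense subalgebra $C_c(G,A,\alpha)$, using the covariance relation $u_h a u_h^{-1} = \alpha_h(a)$ for $a \in A$ (which itself comes from the twisted convolution product), and then extended by continuity. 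Combining them gives $E(a) = \sum_{i=1}^m x_i\,\alpha_{g_i}(E(b))\,y_i = \sum_{i=1}^m x_i\,\alpha_{g_i}(c)\,y_i = 1_A$, and since $E(a) = 1_A \neq 0$ the element $a$ is nonzero, as required. The only real work --- a mild obstacle --- is the bookkeeping verification of these two identities on $C_c(G,A,\alpha)$ together with the continuity argument that carries them to all of $F^p_{\mathrm{r}}(G,A,\alpha)$.
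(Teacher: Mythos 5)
Your proposal is correct and follows essentially the same route as the paper's own proof: normalize by right multiplication with $u_{g^{-1}}$ to get $b\in I$ with $E(b)\neq 0$, use $G$-simplicity to write $1_A=\sum_i x_i\,\alpha_{g_i}(E(b))\,y_i$, and lift via $a=\sum_i x_iu_{g_i}\,b\,u_{g_i}^{-1}y_i$. Your added verifications (that $J$ is $G$-invariant and that $E$ is an $A$-bimodule map intertwining the inner action with $\alpha$) are exactly the details the paper leaves implicit.
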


\begin{proof}
 First we show that  there is an element $b\in I$ with $E(b) \neq 0$. To this end, consider a nonzero element $c\in I$. By Proposition \ref{CE}, there exists
 $g \in G$  such that $E_g (c) \neq 0 $.  Since $C_c (G, A, \alpha) $ is dense in $F^p_{\mathrm{r}} (G, A, \alpha)$ we may choose a sequence $\{c_n\} \subset C_c (G, A, \alpha) $ such that $\lim_n c_n = c.$
Continuity of $E_g$ implies that  $\lim_n E_g (c_n) = E_g (c)$. On the other hand, $E_g (c_n) = E (c_n u_{g^{-1}})$
and thus $$ E(c u_{g^{-1}}) = \lim_n E(c_n u_{g^{-1}}) = \lim_n E_g (c_n) = E_g (c).$$
Clearly $c u_{g^{-1}} \in I$. So for $b = c u_{g^{-1}} \in I$  we have $E(b) \neq 0$. Define $J$ to be the two-sided ideal of $A$ generated by $\{\alpha_g(E(b)): g\in G\}$, $G$-Simplicity of $A$ implies that  $J = A$. Hence there are $ m\in \mathbb{N},\; g_1, \ldots, g_m \in G \,\, \text{and}\,\, a_1, \ldots, a_m, b_1, \ldots, b_m \in A$ such that
$$\sum_{i = 1}^{m}{a_i \alpha_{g_{i}} (E(b))} b_i  = 1_A.$$
 Take $a = \sum_{i = 1}^{m}{a_i u_{g_i} b \,  u_{g_i^{-1}} b_i} \in I. $
It is easy to see that $$E(a) = \sum_{i = 1}^{m}{a_i \alpha_{g_i} (E(b))} b_i = 1_A\; ,  $$
and we are done.
\end{proof}


\begin{theorem} \label{main}
Let $p \in (1, \infty)$, let $G$ be a countable Powers group, and  let $(G, A, \alpha)$ be a separable nondegenerately representable isometric $G$-$L^p$ operator algebra such that $A$ is unital and $G$-simple, then $F^p_{\mathrm{r}} (G, A, \alpha)$ is simple.
\end{theorem}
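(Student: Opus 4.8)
The plan is to deduce simplicity of $F^p_{\mathrm r}(G,A,\alpha)$ by combining the two preceding results: Lemma \ref{S} produces, inside any nonzero two-sided ideal, an element whose standard conditional expectation is the unit, while Theorem \ref{*} lets us average that element so that it becomes norm-close to the unit, hence invertible.

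First I would let $I$ be a nonzero two-sided ideal of $F^p_{\mathrm r}(G,A,\alpha)$. Since $A$ is unital and $G$-simple and $(G,A,\alpha)$ satisfies the standing hypotheses, Lemma \ref{S} applies and yields a nonzero $a \in I$ with $E(a) = 1_A$. Fix $\epsilon \in (0,1)$, say $\epsilon = \tfrac12$. By Theorem \ref{*} there exist $k \in \mathbb{N}$ and $h_1,\dots,h_k \in G$ such that the averaging operator $T(b) = \tfrac1k \sum_{j=1}^k u_{h_j} b\, u_{h_j}^{-1}$ satisfies $\| T(a - E(a)) \|_{\mathrm r} < \tfrac12$.

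Next I would observe that $T$ fixes the unit: for each $j$ we have $u_{h_j} 1_A u_{h_j}^{-1} = u_{h_j} u_{h_j^{-1}} = u_e = 1_A$, so $T(1_A) = 1_A$, and since $E(a) = 1_A$ this gives $\| T(a) - 1_A \|_{\mathrm r} < \tfrac12 < 1$. Because $F^p_{\mathrm r}(G,A,\alpha)$ is a unital Banach algebra with unit $1_A$ and $\|\cdot\|_{\mathrm r}$ is submultiplicative, the element $T(a)$ is invertible, with inverse given by the Neumann series $\sum_{m \geq 0} (1_A - T(a))^m$. On the other hand, $a \in I$ and $I$ is a two-sided ideal, so each $u_{h_j} a\, u_{h_j}^{-1}$ lies in $I$, whence $T(a) = \tfrac1k \sum_{j=1}^k u_{h_j} a\, u_{h_j}^{-1} \in I$. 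An ideal of a unital algebra containing an invertible element is the whole algebra, so $I = F^p_{\mathrm r}(G,A,\alpha)$. Since $I$ was an arbitrary nonzero two-sided ideal, $F^p_{\mathrm r}(G,A,\alpha)$ is simple.

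I do not expect a separate obstacle at this stage: the real difficulties have already been absorbed into Theorem \ref{*} (the averaging estimate, which rests on the $L^p$--$L^q$ duality and the combinatorics of the Powers condition) and into Lemma \ref{S} (using $G$-simplicity of $A$ to promote $E(b) \neq 0$ to $E(a) = 1_A$). The only points requiring care are bookkeeping: that the reduced crossed product is complete and unital with unit identified with $1_A$ (recorded in the standing assumptions of this section), that the norm $\|\cdot\|_{\mathrm r}$ is submultiplicative so that the Neumann series converges, and that Theorem \ref{*} is invoked with a specific $\epsilon < 1$. One may also remark that the invertible-element argument applies verbatim to an arbitrary, not necessarily closed, two-sided ideal, so no closedness hypothesis on $I$ is needed.
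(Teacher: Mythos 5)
Your argument is correct and follows essentially the same route as the paper's own proof: invoke Lemma \ref{S} to obtain $a \in I$ with $E(a)=1_A$, apply Theorem \ref{*} with $\epsilon=\tfrac12$ so that $T(a)=\tfrac1k\sum_j u_{h_j} a u_{h_j}^{-1}$ lies in $I$ and is within distance $\tfrac12$ of the unit, hence invertible, forcing $I$ to be everything. Your explicit remarks about $T(1_A)=1_A$ and the Neumann series just make precise what the paper leaves implicit.
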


\begin{proof}
Let $I$ be a nonzero two-sided ideal in $F^p_{\mathrm{r}} (G, A, \alpha)$. By Lemma \ref{S} there exists  $a \in I$  such that $ E(a) = 1$.
Applying Lemma \ref{*} to $a - E (a)$ and $\epsilon = \frac{1}{2}$ shows that
there exist $k \in {\mathbb N}$ and $h_1, \ldots, h_k \in G$ such that
$$ \left\|\frac{1}{k}\sum_{j=1}^{k}{u_{h_j} a u_{h_j}^{-1}- 1_A}\right\|_{\mathrm{r}}  =\left\|\frac{1}{k} \sum_{j = 1}^{k}{u_{h_j} (a - E (a)) u_{h_j}^{-1}}\right\|_{\mathrm{r}} < {\frac{1}{2}}\;.$$
Consequently, $I$ contains an invertible element $\frac{1}{k} \sum_ {j =1}^{k}{u_{h_j} a u_{h_j}^{-1}}.$ Therefore $I = F_{\mathrm{r}}^p (G, A, \alpha).$
This shows that $F_{\mathrm{r}}^p (G, A, \alpha)$ is simple.
\end{proof}
As a consequence, for $p\in (1, \infty)$, $F^{p}_{\mathrm r}(G)$,
 the reduced $L^p$ operator group algebra  of a countable Powers group $G$  is simple with a unique normalized trace. This generalizes \cite[Theorem 1.1]{PS} in the case of countable groups and \cite{Po} in $L^p$ level.

\begin{remark}
Let $G$ be a countable discrete group. By \cite[Proposition 3.14]{PhCr} for $p = 1$, the Banach algebras $l^1(G)$, $F^1_{\mathrm{r}}(G)$ and $F^1(G)$ are isometrically isomorphic.
Take the trivial homomorphism $\phi \colon G \to \mathbb{C}$. We then get an induced homomorphism $\tilde{\phi} \colon l^1(G) \to \mathbb{C}$ whose kernel is a nontrivial tow-sided ideal. As a result, non of the reduced and full $L^1$  operator group algebras of a countable discrete group is  simple.
\end{remark}

\begin{remark}
The hypothesis that $A$ is unital is essential in Theorem \ref{main}, indeed the example mentioned in the last part of \cite{HS} shows that  Theorem \ref{main} does not hold for  nonunital $L^p$ operator algebras even for $p=2$.
\end{remark}

We would like to express our  thanks to  N. Christopher  Phillips for valuable comments and conversations. 


\end{document}